\newtheorem{thm}{Theorem}
\newtheorem{lem}[thm]{Lemma}
\newtheorem{cor}[thm]{Corollary}
\numberwithin{thm}{section}
\numberwithin{equation}{section}
\theoremstyle{definition}
\newcommand{\rat}{\mathbb Q}
\newcommand{\alg}{\overline\rat}
\newcommand{\proj}{\mathbb P}
\newcommand{\intg}{\mathbb Z}
\newcommand{\boldx}{{\bf x}}
\newcommand{\boldz}{{\bf z}}
\newcommand{\bolda}{{\bf a}}
\newcommand{\boldc}{{\bf c}}
\newcommand{\bolde}{{\bf e}}
\newcommand{\boldw}{{\bf w}}
\title[Estimating heights using auxiliary functions]
{Estimating heights using auxiliary functions}
\author[C.L. Samuels]{Charles L. Samuels}
\address{Max-Planck-Instit\"ut f\"ur Mathematik, Vivatsgasse 7, 53111 Bonn, Germany}
\email{csamuels@mpim-bonn.mpg.de}
\subjclass[2000]{Primary 11R04, 11R09}
\keywords{projective height, Weil height, subspace height, Mahler measure, Lehmer's problem}
\begin{document}

\baselineskip=17pt

\begin{abstract}
  Several recent papers construct auxiliary polynomials to bound the Weil height of certain classes of algebraic 
  numbers from below.  Following these techniques, the author gave a general method for introducing auxiliary 
  polynomials to problems involving the Weil height.  The height appears as a solution to a certain extremal 
  problem involving polynomials.  We further generalize the above techniques to acquire both the projective 
  height and the height on subspaces in the same way.  We further obtain lower bounds on the heights of points
  on some subvarieties of $\proj^{N-1}(\alg)$.
\end{abstract}

\maketitle

\section{Introduction}

Let $K$ be a number field and let $v$ be a place of $K$ dividing the place $p$ of $\rat$.  Of course, if $v$ is
non-Archimedean then $p$ is a rational prime while if $v$ is Archimedean then $p=\infty$.  We write $K_v$
to denote the completion of $K$ at $v$ and $\rat_p$ to denote the completion of $\rat$ at $p$.  It is clear
that these completions do not depend on a specific absolute value taken from the places $v$ and $p$.  We write
$d=[K:\rat]$ for the global degree and $d_v=[K_v:\rat_p]$ for the local degree.

We now select two absolute values on $K_v$ for each place $v$.  The first absolute value, denoted $\|\cdot \|_v$, is
the unique extension of the $p$-adic absolute value on $\rat_p$.  The second, denoted $|\cdot |_v$, is defined by
\begin{equation*}
  |x|_v = \|x\|_v^{d_v/d}
\end{equation*}
for all $x\in K_v$.  We note the important identity
\begin{equation*}
  d = \sum_{v|p}d_v
\end{equation*}
as well as the product formula
\begin{equation*}
  \prod_v|\alpha|_v = 1
\end{equation*}
for all $\alpha\in K^\times$.  Furthermore, each of the above absolute values extends uniquely to an 
algebraic closure $\overline K_v$.  If $v$ is Archimedean then $\overline K_v$ is complete, however,
in general, $\overline K_v$ is not complete and we write $\Omega_v$ to denote
its completion.  It is well-known that $\Omega_v$ is algebraically closed for all places $v$.  
Moreover, we may define the {\it Weil Height} of $\alpha\in K$ by
\begin{equation*}
  h(\alpha) = \prod_v\max\{1,|\alpha|_v\}
\end{equation*}
where the product is taken over all places $v$ of $K$.  By the way we have normalized our absolute values, 
this definition does not depend on $K$, and therefore, is a well-defined function on $\alg$.

For $f\in\intg[x]$ having roots $\alpha_1,\ldots,\alpha_d$ we define the {\it Mahler measure} of $f$ by
\begin{equation*}
  \mu(f) = \prod_{k=1}^dh(\alpha_k).
\end{equation*}
Since $h$ is invariant under Galois conjugation over $\rat$, we note that if $f$ is irreducible and
$\alpha$ is any root of $f$ then $\mu(f) = h(\alpha)^{\deg\alpha}$.

By Kronecker's Theorem, $\mu(f)\geq 1$ with equality precisely when $f$ is a product of cyclotomic polynomials
and $\pm x$.  Further, in 1933, D.H. Lehmer \cite{Lehmer} asked if there exists a constant $c>1$ such that 
$\mu(f)\geq c$ in all other cases.  It can be computed that
\begin{equation*}
  \mu(x^{10} + x^9 - x^7 - x^6 - x^5 - x^4 - x^3 + x +1) = 1.17\ldots
\end{equation*}
which remains the smallest known Mahler measure greater than $1$.

Since Lehmer's famous 1933 paper, many special cases of his proposed problem have been solved.  
In 1971, Smyth \cite{Smyth} showed that if $\alpha$ and $\alpha^{-1}$ are not Galois conjugates, then the minimal
polynomial of $\alpha$ over $\rat$ has Mahler measure at least $\mu(x^3-x-1)$.  In a different direction,
Schinzel \cite{Schinzel} showed as corollary to a more general result that if $f\in\rat[x]$ has only real roots then 
$\mu(f)\geq (1+\sqrt 5)/2$. 

Recently, Borwein, Dobrowolski and Mossinghoff \cite{BDM} showed that if $f\in\intg[x]$ has no cyclotomic factors
and has coefficients congruent to $1$ modulo an integer $m$, then
\begin{equation} \label{BDMAuxPol}
  \mu(f)\geq c_m(T)^{\frac{\deg f}{1+\deg f}}.
\end{equation}
Here, $c_m(T)>1$ is a constant depending on $m$ and an auxiliary polynomial $T\in\intg[x]$.  They were
able to obtain an explicit lower bound for $\mu(f)$ by making a choice of auxiliary polynomial $T$.
Later, Dubickas and Mossinghoff \cite{DubMoss} generalized the results of \cite{BDM} so that the polynomial
$f$ in \eqref{BDMAuxPol} may be any factor of a polynomial having coefficient congruent to $1\mod m$.
They further constructed a sequence of auxiliary polynomials that further improved the explicit bounds
given in \cite{BDM}.    Following these methods, the author \cite{Samuels} constructed a function
$U(\alpha,T)$ and showed that
\begin{equation} \label{SamuelsOrig}
  1= h(\alpha)\cdot U(\alpha,T),
\end{equation}
for all polynomials $T$ over $\alg$ with $T(\alpha)\ne 0$.  We now briefly recall this construction.

Define the {\it local supremum norm} of $T\in\Omega_v[x]$ on the unit ball by
\begin{equation} \label{PolyLocalSupNorm}
  \nu_v(T) = \sup\{|T(z)|_v: z\in\Omega_v\ \mathrm{and}\ |z|_v\leq 1\}.
\end{equation}
Consider the vector space $\mathcal J_v$ of polynomials over $\Omega_v$ of degree at most $N-1$.  For
$\alpha\in\Omega_v$ and $T\in \mathcal J_v$ define
\begin{equation} \label{PolyQuotientSupNorm}
  U_v(\alpha,T) = \inf\{\nu_v(T-f): f\in\mathcal J_v\ \mathrm{and}\ f(\alpha) = 0\}.
\end{equation}
Lemma 2.1 of \cite{Samuels} states that
\begin{equation} \label{SamuelsOrigLem}
  |T(\alpha)|_v = \max\{1,|\alpha|_v\}^N\cdot U_v(\alpha,T).
\end{equation}
Now assume that $\alpha\in\alg$ and $T\in\alg[x]$ with $T(\alpha)\ne 0$.  In this situation,
we may define a global version of $U_v(\alpha,T)$ by
\begin{equation*}
  U(\alpha,T) = \prod_vU_v(\alpha,T)
\end{equation*}
where the product runs over all places $v$ of a number field containing $\alpha$ and the coefficients of $T$.
According to \eqref{SamuelsOrigLem}, this product is indeed finite and it does not depend on the number 
field we choose to contain $\alpha$ and the coefficients of $T$.  We may apply the product formula to
$|T(\alpha)|_v$ to obtain \eqref{SamuelsOrig}.  

The advantage of this identity is that we may freely select $T$
in a way that is convenient without changing the value of $U(\alpha,T)$.  It can then be used to estimate the
Weil height in certain special cases as found in \cite{Samuels}.
Our goal for this paper is to apply this strategy to obtain analogous results regarding the projective 
height and, more generally, the height on subspaces.

If $\bolda = (a_1,\ldots,a_N)\in \Omega_v^N$ define the {\it local projective height} of $\bolda$ by
\begin{equation} \label{LocalProjHeight}
  H_v(\bolda) = \max\{|a_1|_v,\ldots,|a_N|_v\}.
\end{equation}
That is, the local projective height is simply the maximum norm on $\Omega_v^N$ with respect to $|\cdot |_v$.
It is worth noting that some authors define the projective height using the maximum norm only at
the non-Archmedean places while using the $L^2$ norm on the components of $\bolda$ at the Archimedean places.
However, we are motivated by generalizing the Weil height, so we will find it more relevant to use the maximum norm 
at all places in our definition. Indeed, we note that 
\begin{equation*}
  H((1,\alpha,\ldots,\alpha^N)) = h(\alpha)^N.
\end{equation*}
It is clear that $H_v(\bolda)=1$ for almost all places $v$ of $K$ so we may define 
the {\it global projective height} of $\bolda\in K^N$ by
\begin{equation*}
  H(\bolda) = \prod_vH_v(\bolda)
\end{equation*}
where the product is taken over all places $v$ of $K$.  Of course, by the way we have chosen our absolute values,
this definition does not depend on $K$.  Furthermore, the product formula implies that $H(\bolda)$ is well
defined on $\proj^{N-1}(\alg)$.  In section \ref{Homogeneous}, we define $U(\bolda,T)$ analogous to
\eqref{PolyQuotientSupNorm} and prove that
\begin{equation} \label{ProjTheorem}
  1 = H(\bolda)^M\cdot U(\bolda,T).
\end{equation}  
Here $T$ is an homogeneous polynomial of degree $M$ in $N$ variables over $\alg$ with $T(\bolda)\ne 0$.
We also give a simple application of this result to demonstrate how it might be applied.

In a slightly different direction, suppose that $W$ is a subspace of $\alg^N$ with basis 
$\{\boldw_1,\ldots,\boldw_M\}$.  The {\it height of $W$} is defined to be the height of the vector 
$\boldw_1\wedge\cdots\wedge \boldw_M$ in the exterior product $\wedge^M(\alg^N)$.  That is,
\begin{equation} \label{GlobalMatrixHeight}
  H(W) = H(\boldw_1\wedge\cdots\wedge \boldw_M).
\end{equation}
This definition does not depend on $K$, and it follows 
from the product formula that $H(W)$ does not depend on our choice of basis.
In section \ref{Matrices}, we define $U(W,\Psi)$ for a surjective linear transformation $\Psi:\alg^N\to\alg^M$
and prove that
\begin{equation} \label{SubspaceTheorem}
  1= H(W)\cdot U(W,\Psi)
\end{equation}
whenever $W\cap\ker\Psi = \{{\bf 0}\}$.  This provides an analog of \eqref{SamuelsOrig} using the height on
subspaces.

\section{The projective height using auxiliary homogeneous polynomials} \label{Homogeneous}

We begin by defining the function $U(\bolda,T)$ given in \eqref{ProjTheorem}.
Let $\mathcal L_v$ denote the vector space of homogeneous polynomials over $\Omega_v$ of degree $M$ in $N$ 
variables along with the zero polynomial.   We define an analog of the local supremum norm on polynomials by
\begin{equation} \label{ProjectiveLocalSupHeight}
  \nu_v(T) = \sup\{|T(\boldz)|_v:\boldz\in\Omega_v^M, H_v(\boldz)\leq 1\}
\end{equation}
and set
\begin{equation} \label{TwistedProjectiveLocalSupHeight}
  U_v(\bolda,T) = \inf\{\nu_v(T-f):f\in \mathcal L_v,\ f(\bolda)=0\}
\end{equation}
for $T\in\mathcal L_v$.  This is the local version of $U(\bolda,T)$ that will appear in our theorem.
Let
\begin{equation*}
	Z(\bolda) = \{f\in \mathcal L_v: f(\bolda) = 0\}.
\end{equation*}
It is obvious that \eqref{TwistedProjectiveLocalSupHeight} descends to a norm on the one-dimensional quotient
$\mathcal L_v/Z(\bolda)$ so that the ratio $|T(\bolda)|_v/U_v(\bolda,T)$ does not depend on $T$.  
In fact, we are able to prove something much stronger.

\begin{lem} \label{ProjectiveLocal}
  If $\bolda\in\Omega_v^N$ then
  \begin{equation} \label{ProjectiveLocalEq}
    |T(\bolda)|_v = H_v(\bolda)^M\cdot U_v(\bolda,T)
  \end{equation}
  for all $T\in\mathcal L_v$.
\end{lem}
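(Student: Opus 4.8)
The plan is to prove the two inequalities $|T(\bolda)|_v\le H_v(\bolda)^M\,U_v(\bolda,T)$ and $|T(\bolda)|_v\ge H_v(\bolda)^M\,U_v(\bolda,T)$ separately; both rest on one elementary observation. If $\bolda={\bf 0}$ then $T(\bolda)=0$, and since every element of $\mathcal L_v$ is homogeneous of positive degree, hence vanishes at ${\bf 0}$, taking $f=T$ shows $U_v(\bolda,T)=0$; thus \eqref{ProjectiveLocalEq} holds trivially. So assume $\bolda\ne{\bf 0}$ and fix an index $j$ with $|a_j|_v=H_v(\bolda)$, so that $a_j\in\Omega_v^\times$. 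Put $\boldz_0=a_j^{-1}\bolda$; then $H_v(\boldz_0)=|a_j|_v^{-1}H_v(\bolda)=1$, so $\boldz_0$ lies in the closed unit ball appearing in the definition \eqref{ProjectiveLocalSupHeight} of $\nu_v$.

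For the lower bound, take any $f\in\mathcal L_v$ with $f(\bolda)=0$. Since $\boldz_0$ lies in the unit ball and $T-f$ is homogeneous of degree $M$,
\begin{equation*}
  \nu_v(T-f)\ \ge\ |(T-f)(\boldz_0)|_v\ =\ |a_j|_v^{-M}\,|(T-f)(\bolda)|_v\ =\ H_v(\bolda)^{-M}\,|T(\bolda)|_v.
\end{equation*}
Taking the infimum over all admissible $f$ gives $U_v(\bolda,T)\ge H_v(\bolda)^{-M}|T(\bolda)|_v$, which is one of the two inequalities.

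For the reverse inequality I would produce an explicit competitor. Set
\begin{equation*}
  f(\boldx)\ =\ T(\boldx)-\frac{T(\bolda)}{a_j^M}\,x_j^M,
\end{equation*}
which belongs to $\mathcal L_v$ and satisfies $f(\bolda)=T(\bolda)-T(\bolda)=0$, i.e.\ $f\in Z(\bolda)$. Then $T-f=(T(\bolda)/a_j^M)\,x_j^M$, and the local supremum norm of the monomial $x_j^M$ is exactly $1$: it is at most $1$ because $H_v(\boldz)\le 1$ forces $|z_j|_v\le 1$, and it is at least $1$ by evaluating at a standard coordinate vector. Hence $\nu_v(T-f)=|a_j|_v^{-M}|T(\bolda)|_v=H_v(\bolda)^{-M}|T(\bolda)|_v$, so $U_v(\bolda,T)\le H_v(\bolda)^{-M}|T(\bolda)|_v$. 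Combining with the previous paragraph yields \eqref{ProjectiveLocalEq}.

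Essentially all of the content is in the single identity $\nu_v(x_j^M)=1$, and beyond that the argument is formal, so I do not expect a real obstacle; the only spots deserving a word of care are the finiteness of $\nu_v$ (immediate at non-Archimedean $v$ from the ultrametric inequality, and at Archimedean $v$ from compactness of the closed unit ball) and the degenerate case $\bolda={\bf 0}$ disposed of above. I note also that, because $H_v(\bolda)=|a_j|_v$ lies in the value group of $|\cdot|_v$ on $\Omega_v^\times$, one could equivalently rescale $\bolda$ by $a_j^{-1}$ to reduce first to the case $H_v(\bolda)=1$, since $|T(\lambda\bolda)|_v=|\lambda|_v^M|T(\bolda)|_v$, $H_v(\lambda\bolda)^M=|\lambda|_v^MH_v(\bolda)^M$, and $U_v(\lambda\bolda,T)=U_v(\bolda,T)$ for $\lambda\in\Omega_v^\times$; but carrying a general maximizing coordinate $j$ through the computation costs nothing.
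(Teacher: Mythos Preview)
Your proof is correct and follows essentially the same approach as the paper: both pick a coordinate $a_j$ of maximal absolute value, evaluate at $\bolda/a_j$ to get one inequality, and use the monomial $(x_j/a_j)^M$ as the key competitor for the other. Your presentation of the second inequality is slightly more direct---you simply exhibit the specific $f=T-(T(\bolda)/a_j^M)x_j^M$ and compute $\nu_v(T-f)$, whereas the paper first reduces to showing $U_v(\bolda,(z_n/a_n)^M)\le |a_n|_v^{-M}$ via a change of variables in the infimum and then takes $f=0$---but the underlying idea is identical; you also handle the degenerate case $\bolda={\bf 0}$ explicitly, which the paper leaves implicit.
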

\begin{proof}
 We will assume that $|a_n|_v = H_v(\bolda)$ and note that
  \begin{equation*}
    |T(\bolda)|_v = |a_n|_v^M\cdot \left|T\left(\frac{\bolda}{a_n}\right)\right|_v
    \leq H_v(\bolda)^M\cdot\nu_v(T)
  \end{equation*}
  for all homogeneous polynomials $T$ of degree $M$ in $N$ variables.  So if $f(\bolda) = 0$ then
  \begin{equation}\label{UBound1}
    |T(\bolda)|_v = |T(\bolda) - f(\bolda)|_v \leq H_v(\bolda)\cdot\nu_v(T-f).
  \end{equation}
  Taking the infimum of the right hand side \eqref{UBound1} over all $f$ having $f(\bolda)=0$ we obtain
  \begin{equation} \label{LEQ2}
    |T(\bolda)|_v \leq H_v(\bolda)\cdot U_v(\bolda,T).
  \end{equation}
  We now attempt to establish the opposite inequality.  We have that
  \begin{align*}
    U_v(\bolda,T) & = \inf\{\nu_v(T-f):f\in Z(\bolda)\} \\
    & = \inf\{\nu_v(T(\boldz)-(T(\boldz)-(T(\bolda)(z_n/a_n)^M)-T(\bolda)f(\boldz)):f\in Z(\bolda)\} \\
    & = \inf\{\nu_v(T(\bolda)(z_n/a_n)^M-T(\bolda)f(\boldz)):f\in Z(\bolda)\} \\
    & = |T(\bolda)|_v\cdot U_v(\bolda,(z_n/a_n)^M).
  \end{align*}
  It is clear that 
  \begin{align*}
    U_v(\bolda,(z_n/a_n)^M) & \leq \nu_v((z_n/a_n)^M) \\
    & = \sup\{|z_n/a_n|_v^M:|z_n|\leq 1\} \\
    & = |a_n|_v^{-M}
  \end{align*}
  and hence
  \begin{equation*}
    U_v(\bolda,T) \leq |T(\bolda)|_v\cdot H_v(\bolda)^{-M}
  \end{equation*}
  which completes the proof.
\end{proof}

If $T$ is a homogeneous polynomial over $K$ of degree $M$ in $N$ variables and $\bolda\in K^N$ then 
Theorem \ref{ProjectiveLocal} implies that $\nu_v(\bolda,T) = 1$ for almost all places $v$ of $K$.
Hence, we may define the global functions
\begin{equation*}
  \nu(T) = \prod_v\nu_v(T),\quad \mathrm{and}\quad  U(\bolda,T) = \prod_vU_{v}(\bolda,T).
\end{equation*}
which do not depend on $K$.  We now obtain the following projective generalization of \eqref{SamuelsOrig}.

\begin{thm} \label{ProjectiveMain}
  If $\bolda\in\alg^N$ then
  \begin{equation} \label{ProjectiveMainEq}
    1 = H(\bolda)^M\cdot U(\bolda,T)
  \end{equation}
  holds for all homogeneous polynomials $T$ over $\alg$ of degree $M$ in $N$ variables having $T(\alpha)\ne 0$.
\end{thm}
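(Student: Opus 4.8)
The plan is to deduce the global identity \eqref{ProjectiveMainEq} from its local counterpart, Lemma \ref{ProjectiveLocal}, by multiplying over all places and invoking the product formula. First I would fix a number field $K$ large enough to contain every coordinate of $\bolda$ and every coefficient of $T$. Since $T$ is homogeneous and $T(\bolda)\ne 0$ by hypothesis, the value $T(\bolda)$ lies in $K^\times$. For each place $v$ of $K$, applying Lemma \ref{ProjectiveLocal} over $\Omega_v$ gives
\[
  |T(\bolda)|_v = H_v(\bolda)^M\cdot U_v(\bolda,T).
\]

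Next I would verify that the global functions appearing on the right are genuinely defined, i.e. that the relevant products are finite. For all but finitely many places $v$ of $K$ every coordinate of $\bolda$ is $v$-integral and some nonzero coordinate is a $v$-adic unit, so $H_v(\bolda)=1$; likewise $|T(\bolda)|_v=1$ for all but finitely many $v$. By the displayed local identity this forces $U_v(\bolda,T)=1$ at all but finitely many $v$ as well, so both $U(\bolda,T)=\prod_v U_v(\bolda,T)$ and $H(\bolda)^M=\prod_v H_v(\bolda)^M$ have only finitely many factors different from $1$. The same bookkeeping, together with the normalization $|\cdot|_w=\|\cdot\|_w^{d_w/d}$ and the identity $d=\sum_{w\mid p}d_w$, shows that these products are unchanged when $K$ is replaced by a larger number field; hence $U(\bolda,T)$ is well defined on all of $\alg^N$.

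Finally I would take the product of the local identity over all places $v$ of $K$. The left-hand side becomes $\prod_v|T(\bolda)|_v = 1$ by the product formula, which applies precisely because $T(\bolda)\in K^\times$, while the right-hand side factors as
\[
  \left(\prod_v H_v(\bolda)\right)^{\!M}\cdot\prod_v U_v(\bolda,T) = H(\bolda)^M\cdot U(\bolda,T),
\]
yielding \eqref{ProjectiveMainEq}. I do not expect a genuine obstacle here: the substantive work is entirely contained in Lemma \ref{ProjectiveLocal}, and what remains is the routine verification that the products converge and are independent of the chosen number field. The one point deserving a moment's care is ensuring $T(\bolda)\ne 0$ so that the product formula is available — and that is exactly the hypothesis imposed on $T$.
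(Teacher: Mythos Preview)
Your proposal is correct and follows essentially the same route as the paper: fix a number field $K$ containing the data, apply Lemma \ref{ProjectiveLocal} at every place, and take the product over all $v$ using the product formula on $T(\bolda)\in K^\times$. The extra care you take in checking finiteness of the products and independence of $K$ is more explicit than the paper's account, but the argument is the same.
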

\begin{proof}
  Suppose that $K$ is a number field containing the entries of $\bolda$ and the coefficients of $T$.  Hence, we
  may view $\bolda$ as an element of $\Omega_v^N$ and $T$ as an element of $\mathcal L_v$ for all places
  $v$ of $K$.  Thus, Lemma \ref{ProjectiveLocal} implies that
  \begin{equation} \label{LocalApp}
    |T(\bolda)|_v = H_v(\bolda)^M\cdot U_v(\bolda,T)
  \end{equation}
  at every place $v$ of $K$.  The result follows by taking the product of \eqref{LocalApp} over all places of $K$ 
  and applying the product formula to $T(\bolda)$.
\end{proof}

We may construct applications of Theorem \ref{ProjectiveMain} that are similar to those found in \cite{Samuels}.
Suppose, for example, that $F$ is an homogeneous polynomial of degree $M$ in $N$ variables with coefficients in $\intg$.  
Let $\mathcal X(F)$ denote the subvariety of $\proj^{N-1}(\alg)$ consisting of all points $\bolda$ with $F(\bolda)=0$.
Suppose further that $T$ is homogeneous of degree $M$ in $N$ variables and that $m\in \intg$ are such that
$$T\equiv F\mod m.$$  That is, the coefficients of $T$ are congruent to the coefficients of $F$ modulo $m$.
If $\bolda\in\mathcal X(T)$ then Theorem \ref{ProjectiveMain} implies that
\begin{equation*}
  1= H(\bolda)^M\cdot U(\bolda, T).
\end{equation*}
Now select a number field $K$ containing the entries of $\bolda$.
If $v$ is non-Archimedean then
\begin{equation*}
  U_v(\bolda,T) \leq \nu_v(T-F) \leq |m|_v
\end{equation*}
so that
\begin{equation*}
  U(\bolda,T) \leq \nu_\infty(T)\prod_{v\nmid\infty}|m|_v = m^{-1}\cdot\nu_\infty(T).
\end{equation*}
If $T$ has coefficients $c_1,\ldots,c_R\in\intg$ define
\begin{equation*}
  L_\infty^1(T) = \left(\sum_{r=1}^R \|c_r\|_v\right)^{d_v/d}
\end{equation*}
and note that by the triangle inequality we have that $\nu_\infty(T)\leq L_\infty^1(T)$.  
Hence, we obtain a lower bound on the projective height of $\bolda$
\begin{equation} \label{CongruentLowerBound}
  H(\bolda)^{\deg F} \geq \frac{m}{L_\infty^1(T)}.
\end{equation}
for all $\bolda\in \mathcal X(F)\setminus \mathcal X(T)$.  Hence, if $L_\infty^1(T)$ is small relative to $m$
then we obtain a uniform lower $H(\bolda)^{\deg F}$ over all $\bolda\in \mathcal X(F)\setminus \mathcal X(T)$.
In particular, if $T$ is a monomial having coefficient $\pm 1$ then \eqref{CongruentLowerBound} becomes
\begin{equation*}
  H(\bolda)^{\deg F} \geq m
\end{equation*}
which is non-trivial for all $m\geq 2$.

\section{The height on subspaces using auxiliary linear transformations}\label{Matrices}

We now turn our attention to the height on subspaces and attempt to construct an analog Theorem
\ref{ProjectiveMain}.  Suppose that $X$ is an $N$-dimensional vector space over $\Omega_v$ and fix a basis
$\{\bolde_1,\ldots,\bolde_N\}$ for $X$.  For ease of notation, we identify $X$ with $\Omega_v^N$ by writing
\begin{equation*}
  \boldx = x_1\bolde_1 +\cdots + x_N\bolde_N = (x_1,\ldots, x_N).
\end{equation*}
In this way, we obtain the projective height of $\boldx\in X$ by
\begin{equation*}
  H_v(\boldx) = \max\{|x_1|_v,\ldots,|x_M|_v\}.
\end{equation*}
Of course, this is a norm on $X$, and therefore, it yields the natural dual norm of an element $\phi\in X^\ast$
\begin{equation*}
  \nu_v(\phi) = \sup\{|\phi(\boldx)|_v:\boldx\in X,\ H_v(\boldx)\leq 1\}.
\end{equation*}
Now fix an element $\boldw\in X$ and let 
\begin{equation*}
  S^\ast(\boldw) = \{\phi\in X^\ast:\phi(\boldw) = 0\}
\end{equation*}
so that $S^\ast(\boldw)$ is an $N-1$ dimensional subspace of $X^\ast$.  Finally, for $\psi\in X^\ast$ we set
\begin{equation*}
  U_v(\boldw,\psi) = \inf\{\nu_v(\psi-\phi):\phi\in S^\ast(\boldw)\}.
\end{equation*}
We note that this defines a norm on the one dimensional quotient $X^\ast/S^\ast(\boldw)$.  Of course, this implies that
the ratio $|\psi(\boldw)|_v/U_v(\boldw,\psi)$ depends only on $\boldw$ and $v$.  Analogous to the results of
\cite{Samuels} and the results of section \ref{Homogeneous} we are able to determine this ratio precisely.

\begin{lem} \label{LocalRegular}
  If $\boldw\in X$ then
  \begin{equation*}
    |\psi(\boldw)|_v = H_v(\boldw)\cdot U_v(\boldw,\psi)
  \end{equation*}
  holds for all $\psi\in X^\ast$.
\end{lem}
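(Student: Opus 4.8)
The plan is to mimic the two-sided inequality argument used in Lemma~\ref{ProjectiveLocal}, adapted to the linear-functional setting. First I would dispose of the degenerate case $\boldw = {\bf 0}$, where $H_v(\boldw) = 0$ and every $\psi \in X^\ast$ lies arbitrarily close (in the $\nu_v$-norm) to $S^\ast(\boldw) = X^\ast$, so $U_v(\boldw,\psi) = 0$ and both sides vanish. Assuming $\boldw \ne {\bf 0}$, normalize: if $|w_n|_v = H_v(\boldw)$, then for any $\boldx$ with $H_v(\boldx) \le 1$ and any $\phi \in S^\ast(\boldw)$ we have $|\psi(\boldw)|_v = |\psi(\boldw) - \phi(\boldw)|_v = |w_n|_v \cdot |(\psi-\phi)(\boldw/w_n)|_v \le H_v(\boldw)\cdot\nu_v(\psi-\phi)$, since $H_v(\boldw/w_n) \le 1$. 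Taking the infimum over $\phi \in S^\ast(\boldw)$ gives the easy direction $|\psi(\boldw)|_v \le H_v(\boldw)\cdot U_v(\boldw,\psi)$.

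For the reverse inequality, the strategy is the linear analogue of the substitution trick in Lemma~\ref{ProjectiveLocal}: let $\bolde_n^\ast \in X^\ast$ be the coordinate functional dual to $\bolde_n$, and observe that $\psi - \psi(\boldw)w_n^{-1}\bolde_n^\ast$ vanishes on $\boldw$, hence lies in $S^\ast(\boldw)$. Therefore, by the same scaling computation that gave the quotient-norm homogeneity of $U_v$ in the previous section, $U_v(\boldw,\psi) = |\psi(\boldw)|_v \cdot U_v(\boldw, w_n^{-1}\bolde_n^\ast) = |\psi(\boldw)|_v\cdot|w_n|_v^{-1}\cdot U_v(\boldw,\bolde_n^\ast)$. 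Then I would bound $U_v(\boldw,\bolde_n^\ast) \le \nu_v(\bolde_n^\ast) = \sup\{|x_n|_v : H_v(\boldx)\le 1\} = 1$, which yields $U_v(\boldw,\psi) \le |\psi(\boldw)|_v\cdot H_v(\boldw)^{-1}$, i.e. $H_v(\boldw)\cdot U_v(\boldw,\psi) \le |\psi(\boldw)|_v$. Combining the two inequalities finishes the proof.

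The main subtlety to check carefully is the homogeneity step $U_v(\boldw,\psi) = |\psi(\boldw)|_v \cdot U_v(\boldw, w_n^{-1}\bolde_n^\ast)$: this rests on the fact that $U_v(\boldw,\cdot)$ is genuinely a norm on the one-dimensional quotient $X^\ast/S^\ast(\boldw)$ (already noted in the text) and that $\psi$ and $\psi(\boldw)w_n^{-1}\bolde_n^\ast$ represent the same class in that quotient — since their difference lies in $S^\ast(\boldw)$ — so the norm of the class, computed either way, scales by $|\psi(\boldw)|_v$ relative to the generator $w_n^{-1}\bolde_n^\ast$. Everything else is a routine unwinding of definitions; the only computation worth spelling out is $\nu_v(\bolde_n^\ast) = 1$, which is immediate because $\bolde_n^\ast(\boldx) = x_n$ and $H_v(\boldx) \le 1$ forces $|x_n|_v \le 1$ with equality attained. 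I do not anticipate any genuine obstacle here — the proof is essentially the functional-analytic shadow of Lemma~\ref{ProjectiveLocal}, with coordinate functionals $\bolde_n^\ast$ playing the role that the monomials $(z_n/a_n)^M$ played there.
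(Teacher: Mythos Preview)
Your proof is correct and follows essentially the same route as the paper: both establish the inequality $|\psi(\boldw)|_v \le H_v(\boldw)\cdot U_v(\boldw,\psi)$ by normalizing $\boldw$, and both obtain the reverse inequality by reducing to the single functional $w_n^{-1}\bolde_n^\ast$ and bounding $U_v(\boldw,w_n^{-1}\bolde_n^\ast)\le \nu_v(w_n^{-1}\bolde_n^\ast)=|w_n|_v^{-1}$. The paper dresses this up by introducing an isomorphism $J:X^\ast\to X$ and an explicit basis $\{J^{-1}(\boldc_k)\}$ of $X^\ast$ adapted to $S^\ast(\boldw)$, but its pivotal element $J^{-1}(\boldc_n)$ is exactly your $w_n^{-1}\bolde_n^\ast$ and the remaining basis vectors play no role beyond what your one-line observation $\psi-\psi(\boldw)w_n^{-1}\bolde_n^\ast\in S^\ast(\boldw)$ already supplies.
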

\begin{proof}
  If $\psi(\boldw)=0$ then both sides of the deisred identity equal $0$.  Hence, we assume without loss
  of generality that $\psi(\boldw)\ne 0$.  Let $\boldw=(w_1,\ldots,w_N)$ and we select an integer $n$
  such that $H_v(\boldw) = |w_n|_v$.  Of course, $w_n\ne 0$ and $H_v(\boldw/w_n)=1$ so that we obtain
  \begin{equation*}
    |\psi(\boldw)|_v = |w_n|_v\cdot|\psi(\boldw/w_n)|_v \leq H_v(\boldw)\cdot\nu_v(\psi)
  \end{equation*}
  for all $\psi\in X^\ast$.  Hence, if $\phi\in S^\ast(\boldw)$ then 
  \begin{equation*}
    |\psi(\boldw)|_v = |(\psi-\phi)(\boldw)|_v \leq H_v(\boldw)\cdot\nu_v(\psi-\phi)
  \end{equation*}
  Taking the infimum of the right hand side over all $\phi\in S^\ast(\boldw)$ we obtain
  \begin{equation}\label{LEQ}
    |\psi(\boldw)|_v \leq H_v(\boldw)\cdot U_v(\boldw, \psi).
  \end{equation}

  We now attempt to prove the opposite inequality.  We define the map $J:X^\ast\to X$ by
  \begin{equation*}
    J(\phi) = (\phi(\bolde_1),\ldots,\phi(\bolde_N))
  \end{equation*}
  and note that $J$ is a vector space isomorphism having the property that $\phi(\boldw) = J(\phi)\cdot\boldw$
  where $\cdot$ represents the inner product.  We now define appropriate bases for $X^\ast$ and $S^\ast(\boldw)$.
  Let $\boldc_n = (0,\ldots,0,w_n^{-1},0,\ldots,0)^T$ and note that $c_n\cdot\boldw = 1$.  For each index $k\ne n$,
  we define $\boldc_k$ in the following way.  If $w_k\ne 0$ then we let $\boldc_k$ be the vector having $w_k^{-1}$ as
  the $k$th entry and $-w_n^{-1}$ as the $n$th entry.  If $w_k=0$ then we let $\boldc_k$ be the vector having
  $1$ as the $k$th entry and zero elsewhere.  Hence, $\{J^{-1}(\boldc_1),\ldots,J^{-1}(\boldc_N)\}$ forms 
  a basis for $X^\ast$ and 
  $$\{J^{-1}(\boldc_1),\ldots,J^{-1}(\boldc_{n-1}),J^{-1}(\boldc_{n+1}),\ldots,J^{-1}(\boldc_N)\}$$ forms a basis
  for $S^\ast(\boldw)$.

  Now write $\psi = \psi_1J^{-1}(\boldc_1) + \cdots +  \psi_NJ^{-1}(\boldc_N)$ and note that $\psi(\boldw) = \psi_n$.  
  Therefore,
  \begin{align*}
    U_v(\boldw,\psi) & = \inf\{\nu_v(\psi-\phi):\phi\in S^\ast(\boldw)\} \\
    & = \inf\{\nu_v(\psi_1J^{-1}(\boldc_1) + \cdots +  \psi_NJ^{-1}(\boldc_N) - \phi):\phi\in S^\ast(\boldw)\} \\
    & = \inf\{\nu_v(\psi_nJ^{-1}(\boldc_n) - \psi_n\phi): \phi\in S^\ast(\boldw)\} \\
    & = |\psi_n|_v\cdot U_v(\boldw,J^{-1}(\boldc_n)) \\
    & = |\psi(\boldw)|_v\cdot U_v(\boldw,J^{-1}(\boldc_n))
  \end{align*}
  Next, we observe that
  \begin{align*}
    U_v(\boldw,J^{-1}(\boldc_n)) & \leq \nu_v(J^{-1}(\boldc_n)) \\
    & = \sup\{|c_n\cdot\boldz|_v:H_v(\boldz)\leq 1\} \\
    & = |w_n|_v^{-1} \\
    & = H_v(\boldw)^{-1}.
  \end{align*}
  We have found that
  \begin{equation*}
    U_v(\boldw,\psi) \leq |\psi(\boldw)|_v\cdot H_v(\boldw)^{-1}
  \end{equation*}
  and the result follows from \eqref{LEQ}.
\end{proof}
 
In order to generalize Lemma \ref{LocalRegular} to include the height on subspaces rather than simply the
projective height, we must now consider the $M$th exterior power $\wedge^M(\Omega_v^N)$.  We define the index set
\begin{equation*}
  \mathcal I_M = \{I\subset\{1,2,\ldots,N\}:|I|=M\}.
\end{equation*}
If $\{\bolde_1,\ldots,\bolde_N\}$ is the standard basis for $\Omega_v^N$, we obtain a natural basis 
\begin{equation} \label{WedgeBasis}
  \left\{\bigwedge_{i\in I}\bolde_i: I\in\mathcal I\right\}
\end{equation}
for $\wedge^M(\Omega_v^N)$ over $\Omega_v$.  The height of an element 
$\boldx\in\wedge^M(\Omega_v^N)$ is computed using the basis \eqref{WedgeBasis}.
For $\phi$ belonging to the dual $(\wedge^M(\Omega_v^N))^\ast$, the norm of $\phi$ is given by
\begin{equation*}
  \nu_v(\phi) = \sup\{|\phi(\boldx)|_v:\boldx\in \wedge^M(\Omega_v^N),\ H_v(\boldx)\leq 1\}.
\end{equation*}
If $\boldw\in\wedge^M(\Omega_v^N)$ then
\begin{equation*}
  U_v(\boldw,\psi) =  \inf\{\nu_v(\psi-\phi):\phi\in(\wedge^M(\Omega_v^N))^\ast,\ \phi(\boldw) = 0\}.
\end{equation*}
We also obtain the following lemma showing that a surjective linear transformation $\Psi:\Omega_v^N\to\Omega_v^M$
may be viewed as a map on $\wedge^M(\Omega_v^N)$.

\begin{lem} \label{UniversalProperty}
  Suppose that $\Psi:\Omega_v^N\to\Omega_v^M$ is a surjective linear transformation.  Then there exists
  a unique linear transformation $\wedge^M(\Psi):\wedge^M(\Omega_v^N) \to \Omega_v$ such that
  \begin{equation*}
    \wedge^M(\Psi)(\boldw_1\wedge\cdots\wedge\boldw_M) = \det\left(\begin{array}{c}
        \Psi(\boldx_1) \\ \vdots \\ \Psi(\boldx_M) \end{array}\right)
  \end{equation*}
  for all $\boldw_1,\ldots,\boldw_M\in \Omega_v^N$.
\end{lem}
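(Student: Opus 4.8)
The plan is to read Lemma~\ref{UniversalProperty} as nothing more than the universal property of the exterior power, with $\Psi$ supplying the requisite alternating multilinear gadget. First I would introduce the map
\[
  \Phi\colon \underbrace{\Omega_v^N\times\cdots\times\Omega_v^N}_{M\ \text{factors}}\longrightarrow\Omega_v,\qquad
  \Phi(\boldw_1,\ldots,\boldw_M)=\det\left(\begin{array}{c}\Psi(\boldw_1)\\ \vdots\\ \Psi(\boldw_M)\end{array}\right),
\]
which is meaningful because each $\Psi(\boldw_j)$ is a vector in $\Omega_v^M$, so the displayed array is a genuine $M\times M$ matrix over $\Omega_v$. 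Since $\Psi$ is linear and the determinant is multilinear in the rows of a matrix, $\Phi$ is multilinear in its $M$ arguments; and since interchanging $\boldw_i$ with $\boldw_j$ interchanges two rows of that matrix, $\Phi$ is alternating (equivalently, if $\boldw_i=\boldw_j$ for some $i\neq j$ the matrix has a repeated row and $\Phi$ vanishes). These two observations are the only genuine content of the lemma, and they are immediate.

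Next I would invoke the universal property of $\wedge^M(\Omega_v^N)$: the canonical alternating multilinear map $(\boldw_1,\ldots,\boldw_M)\mapsto\boldw_1\wedge\cdots\wedge\boldw_M$ is initial among alternating multilinear maps on $(\Omega_v^N)^M$, so $\Phi$ factors uniquely through it, i.e.\ there is a unique linear map $\wedge^M(\Psi)\colon\wedge^M(\Omega_v^N)\to\Omega_v$ with $\wedge^M(\Psi)(\boldw_1\wedge\cdots\wedge\boldw_M)=\Phi(\boldw_1,\ldots,\boldw_M)$ for all $\boldw_1,\ldots,\boldw_M\in\Omega_v^N$. This is precisely the identity in the statement. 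Uniqueness of a linear map satisfying that identity also follows directly, since the decomposable elements $\boldw_1\wedge\cdots\wedge\boldw_M$ span $\wedge^M(\Omega_v^N)$; in particular the basis vectors $\bigwedge_{i\in I}\bolde_i$ with $I\in\mathcal I_M$ are of this form, so the formula already pins $\wedge^M(\Psi)$ down on a basis.

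If one wishes to avoid quoting the universal property, the same conclusion can be reached by hand: \emph{define} $\wedge^M(\Psi)$ on the basis~\eqref{WedgeBasis} by declaring $\wedge^M(\Psi)\bigl(\bigwedge_{i\in I}\bolde_i\bigr)$ to be the maximal minor of the $M\times N$ matrix of $\Psi$ in standard coordinates on the columns indexed by $I$, extend $\Omega_v$-linearly, and then verify the stated formula on a general decomposable element by writing $\boldw_j=\sum_k w_{jk}\bolde_k$ and expanding multilinearly. This reduces to the classical fact that the coordinate of $\boldw_1\wedge\cdots\wedge\boldw_M$ along $\bigwedge_{i\in I}\bolde_i$ is the minor of $(w_{jk})$ on the columns in $I$, combined with the Cauchy--Binet formula, which identifies $\sum_I(\text{that minor})\cdot(\text{minor of }\Psi\text{ on }I)$ with $\det$ of the matrix with rows $\Psi(\boldw_j)$. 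Either way the only real step is the well-definedness check, namely that the determinantal prescription respects the alternating-multilinear relations defining $\wedge^M(\Omega_v^N)$; the rest is bookkeeping. I would also remark that surjectivity of $\Psi$ is not needed for existence or uniqueness here — it is recorded because it will later guarantee that $\wedge^M(\Psi)$ is itself surjective onto $\Omega_v$.
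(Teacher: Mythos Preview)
Your argument is correct and essentially identical to the paper's: define the determinantal map on $(\Omega_v^N)^M$, check it is alternating multilinear, and invoke the universal property of $\wedge^M$. The paper does exactly this (factoring through an intermediate map $\Psi'$ into $M\times M$ matrices before applying $\det$), without your supplementary Cauchy--Binet construction or the observation that surjectivity is unnecessary for the lemma itself.
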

\begin{proof}
  Let $\mathcal M_{M\times M}(\Omega_v)$ denote the vector space of $M\times M$ matrices with entries in $\Omega_v$.
  We note that $\Psi$ induces a unique $M$-multilinear map $\Psi':(\Omega_v^N)^M \to M_{M\times M}(\Omega_v)$ 
  given by
  \begin{equation*}
    \Psi'(\boldw_1,\ldots,\boldw_M) = \left(\begin{array}{c}
        \Psi(\boldx_1) \\ \vdots \\ \Psi(\boldx_M) \end{array}\right).
  \end{equation*}
  Furthermore, it is well-known (see, for example, \cite{DummitFoote}, p. 437) that the determinant map 
  $\det:\mathcal M_{M\times M}(\Omega_v)\to \Omega_v$ defines an $M$-multilinear map on the rows of the 
  elements in $\mathcal M_{M\times M}(\Omega_v)$.  Hence, we conclude that the composition $\det\circ\Psi'$ is an
  $M$-multilinear map from $(\Omega_v^N)^M$ to $\Omega_v$.  Moreover, if there exist $i\ne j$ with
  $\boldw_i = \boldw_j$ then 
  \begin{equation*}
    \det\circ\Psi'(\boldw_1,\ldots,\boldw_M) = 0
  \end{equation*}
  It follows that $\det\circ\Psi'$ is, in fact, an alternating $M$-multilinear map.
  
  By the universal property for alternating $M$-tensors, there exists a unique linear transformation
  $T:\wedge^M(\Omega_v^N) \to \Omega_v$ such that 
  \begin{equation*}
    T\circ\iota = \det\circ\Psi'
  \end{equation*}
  where
  $\iota:(\Omega_v^N)^M \to \wedge^M(\Omega_v^N)$ is given by
  \begin{equation*}
    \iota(\boldw_1,\ldots,\boldw_M) = \boldw_1\wedge\cdots\wedge\boldw_M.
  \end{equation*}
  Therefore, we conlude that
  \begin{align*}
    T(\boldw_1\wedge\cdots\wedge\boldw_M) & = T(\iota(\boldw_1,\ldots,\boldw_M)) \\
    & = \det(\Psi'(\boldw_1,\ldots,\boldw_M)) \\
    & = \det\left(\begin{array}{c}
        \Psi(\boldx_1) \\ \vdots \\ \Psi(\boldx_M) \end{array}\right).
  \end{align*}
  By taking $\wedge^M(\Psi)= T$ we complete the proof.
\end{proof}

We now assume that $W$ is an $M$-dimensional subspace of $\alg^N$ and $\Psi:\alg^N\to\alg^M$ is a
surjective linear transformation.  Select a basis $\{\boldw_1,\ldots,\boldw_M\}$ for $W$ and assume that $K$ is a
number field containing the entries of each basis element $\boldw_m$ as well as the entries of $\Psi$.
We note that the height of $W$ is given by
\begin{equation*}
  H(W) = \prod_vH_v(\boldw_1\wedge\cdots\wedge\boldw_M)
\end{equation*}
where the product is taken over all places $v$ of $K$.  As we noted in our introduction, the product formula
implies that this definition does not depend on the choice of basis for $W$.  By Lemma
\ref{UniversalProperty} we may define 
\begin{equation} \label{SubspaceQuotientNorm}
  U(W,\Psi) = \prod_vU_v(\boldw_1\wedge\cdots\wedge\boldw_M,\wedge^M(\Psi)).
\end{equation}
Lemma \ref{LocalRegular} shows that this product is indeed finite and, by the way we have normalized our absolute values,
it does not depend on $K$.  As in the height on subspaces, the product formula implies that 
\eqref{SubspaceQuotientNorm} is independent of the basis for $W$ as well.  We may now state and prove our 
main result.

\begin{thm}\label{SubspaceHeightTheorem}
  If $W$ is an $M$-dimensional subspace of $\alg^N$ then
  \begin{equation*}
    1= H(W)\cdot U(W,\Psi)
  \end{equation*}
  holds for all surjective linear transformations $\Psi:\alg^N\to\alg^M$ with 
  $W\cap\ker\Psi = \{{\bf 0}\}$.
\end{thm}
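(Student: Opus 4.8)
The plan is to follow exactly the pattern used for Theorem~\ref{ProjectiveMain}: reduce the global identity to the local one furnished by Lemma~\ref{LocalRegular}, applied this time inside the exterior power, and then multiply over all places via the product formula. First I would fix a basis $\{\boldw_1,\ldots,\boldw_M\}$ for $W$ and choose a number field $K$ large enough to contain the coordinates of each $\boldw_m$ in the standard basis of $\alg^N$ together with the entries of a matrix representing $\Psi$. For every place $v$ of $K$ we may then regard $\Psi$ as a surjective linear map $\Omega_v^N\to\Omega_v^M$ by base change and form $\wedge^M(\Psi)\in(\wedge^M(\Omega_v^N))^\ast$ via Lemma~\ref{UniversalProperty}; likewise we set $\boldw=\boldw_1\wedge\cdots\wedge\boldw_M\in\wedge^M(\Omega_v^N)$, whose coordinates in the basis \eqref{WedgeBasis} all lie in $K$.

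The one genuinely new point, and the only place where the hypothesis $W\cap\ker\Psi=\{{\bf 0}\}$ enters, is to check that $\wedge^M(\Psi)(\boldw)\ne 0$. By Lemma~\ref{UniversalProperty} this value equals the determinant of the $M\times M$ matrix whose rows are $\Psi(\boldw_1),\ldots,\Psi(\boldw_M)$. Since $\{\boldw_1,\ldots,\boldw_M\}$ spans $W$ and $W\cap\ker\Psi=\{{\bf 0}\}$, the restriction of $\Psi$ to $W$ is injective, hence an isomorphism of $W$ onto $\alg^M$ for dimension reasons; consequently $\Psi(\boldw_1),\ldots,\Psi(\boldw_M)$ are linearly independent in $\alg^M$ and the determinant is nonzero. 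Because the coordinates of the $\boldw_m$ and the entries of $\Psi$ all lie in $K$, this determinant is an element of $K^\times$, so the product formula will apply to it.

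With this in hand, I would invoke Lemma~\ref{LocalRegular} with the ambient space taken to be $X=\wedge^M(\Omega_v^N)$ equipped with the basis \eqref{WedgeBasis}, the distinguished vector $\boldw$, and the functional $\psi=\wedge^M(\Psi)$, yielding
\begin{equation*}
  |\wedge^M(\Psi)(\boldw)|_v = H_v(\boldw_1\wedge\cdots\wedge\boldw_M)\cdot U_v(\boldw_1\wedge\cdots\wedge\boldw_M,\wedge^M(\Psi))
\end{equation*}
at every place $v$ of $K$. Taking the product of this identity over all places of $K$, the left-hand side becomes $1$ by the product formula applied to $\wedge^M(\Psi)(\boldw)\in K^\times$, while the right-hand side becomes $H(W)\cdot U(W,\Psi)$ by the definitions \eqref{GlobalMatrixHeight} and \eqref{SubspaceQuotientNorm}; this is precisely the asserted identity. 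Since the argument rests entirely on results already established, I anticipate no serious obstacle: the only subtlety worth stating carefully is the non-vanishing of $\wedge^M(\Psi)(\boldw)$, and one should also remark in passing that this quantity, hence every term above, is independent of the chosen basis of $W$ by the same product-formula argument already used to make $H(W)$ and $U(W,\Psi)$ well defined.
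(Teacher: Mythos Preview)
Your proposal is correct and follows essentially the same route as the paper: fix a basis and a number field $K$, apply Lemma~\ref{LocalRegular} in $\wedge^M(\Omega_v^N)$ to obtain the local identity, use Lemma~\ref{UniversalProperty} together with $W\cap\ker\Psi=\{{\bf 0}\}$ to see that $\wedge^M(\Psi)(\boldw_1\wedge\cdots\wedge\boldw_M)\in K^\times$, and then take the product over all places via the product formula. Your treatment of the non-vanishing step is in fact slightly more explicit than the paper's, but the argument is otherwise identical.
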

\begin{proof}
  Let $\{\boldw_1,\ldots,\boldw_M\}$ be a basis for $W$ and let $K$ be a number field containing
  the entries of each basis element $\boldw_m$ and the entries of $\Psi$.  Hence, $\boldw_m\in\Omega_v^N$
  and $\Psi:\Omega_v^N\to\Omega_v^M$ for all places $v$ of $K$.  Therefore, Lemma \ref{LocalRegular} implies that
  \begin{align} \label{WedgeLocal}
    |\wedge^M(\Psi)&(\boldw_1\wedge\cdots\wedge\boldw_M)|_v \nonumber \\
    &= H_v(\boldw_1\wedge\cdots\wedge\boldw_M)
    \cdot U_v(\boldw_1\wedge\cdots\wedge\boldw_M,\wedge^M(\Psi)).
  \end{align}
  By Lemma \ref{UniversalProperty} we have that
  \begin{equation*}
    \wedge^M(\Psi)(\boldw_1\wedge\cdots\wedge\boldw_M) = \det\left(\begin{array}{c}
        \Psi(\boldw_1) \\ \vdots \\ \Psi(\boldw_M) \end{array}\right).
  \end{equation*}
  Since $W\cap\ker\Psi = \{{\bf 0}\}$ we know that the rows in the above matrix are linearly independent
  so that its determinant is non-zero.  Hence, the left hand side of \eqref{WedgeLocal} is non-zero and we
  may apply the product formula.  The desired identity follows immediately.
\end{proof}

It is natural to consider the special case of Theorem \ref{SubspaceHeightTheorem} in which $W$ is a
one dimensional subspace spanned by an element $\boldw\in\alg^N$.  For $\psi\in(\alg^N)^\ast$ we define
\begin{equation*}
  U(\boldw,\psi) = \prod_v U_v(\boldw,\psi)
\end{equation*}
and obtain the following corollary.

\begin{cor} \label{SingleVectorHeight}
  If $\boldw\in\alg^N$ then
  \begin{equation*}
    1= H(\boldw)\cdot U(\boldw,\psi)
  \end{equation*}
  for all $\psi\in (\alg^N)^\ast$ with $\psi(\boldw)\ne 0$.
\end{cor}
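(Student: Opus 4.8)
The plan is to derive Corollary \ref{SingleVectorHeight} as a direct specialization of Theorem \ref{SubspaceHeightTheorem} to the one-dimensional subspace $W = \alg\boldw$. First I would observe that when $M=1$, a surjective linear transformation $\Psi:\alg^N\to\alg^1=\alg$ is precisely a nonzero element $\psi$ of the dual space $(\alg^N)^\ast$, and the condition $W\cap\ker\Psi = \{{\bf 0}\}$ becomes simply $\psi(\boldw)\ne 0$. (If $\psi$ is the zero functional then both sides of the claimed identity degenerate, but the hypothesis $\psi(\boldw)\ne 0$ already excludes this, so $\psi$ is automatically surjective onto $\alg$.)

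Next I would identify the exterior-power constructions in the theorem with their $M=1$ counterparts. Since $\wedge^1(\Omega_v^N)$ is canonically $\Omega_v^N$ with the same standard basis, the height $H_v(\boldw_1\wedge\cdots\wedge\boldw_M)$ reduces to $H_v(\boldw)$, and the induced map $\wedge^1(\Psi)$ of Lemma \ref{UniversalProperty} is just $\psi$ itself, because the determinant of a $1\times 1$ matrix $(\psi(\boldw))$ is $\psi(\boldw)$. Consequently the local quotient norm $U_v(\boldw_1\wedge\cdots\wedge\boldw_M,\wedge^M(\Psi))$ coincides with $U_v(\boldw,\psi)$ as defined just before the corollary, and therefore the global product $U(W,\Psi)$ equals $U(\boldw,\psi)$, while $H(W) = H(\boldw)$.

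Having made these identifications, the conclusion $1 = H(\boldw)\cdot U(\boldw,\psi)$ is immediate from the statement $1 = H(W)\cdot U(W,\Psi)$ of Theorem \ref{SubspaceHeightTheorem}. I do not expect any genuine obstacle here: the entire content of the corollary is already contained in the theorem, and the proof amounts to carefully checking that the $M=1$ instances of the definitions match the simpler notation introduced for the single-vector case. The one point requiring a sentence of care is the remark that $W\cap\ker\Psi=\{{\bf 0}\}$ translates exactly to $\psi(\boldw)\ne 0$, together with the observation that a nonzero functional is automatically surjective onto the one-dimensional codomain, so the hypotheses of the theorem are met. Everything else is a routine unwinding of definitions, and the proof can be written in just a few lines.
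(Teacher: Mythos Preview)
Your proposal is correct and follows essentially the same approach as the paper: specialize Theorem \ref{SubspaceHeightTheorem} to the one-dimensional subspace $W$ spanned by $\boldw$, identify $H(W)=H(\boldw)$ and $U(W,\psi)=U(\boldw,\psi)$, and check that the hypothesis $\psi(\boldw)\ne 0$ matches the trivial-intersection condition. The paper's proof is terser, but you have unpacked exactly the identifications it leaves implicit.
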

\begin{proof}
  If $W$ is the one dimensional subspace spanned by $\boldw$ then it is easy to see that $H(W) = H(\boldw)$.
  Furthermore, $\psi:\alg^N\to\alg$ is a linear transformation and $U(W,\psi) = U(\boldw,\psi)$.
  Theorem \ref{SubspaceHeightTheorem} yields that $1=H(W)\cdot U(W,\psi)$ and the result follows immediately.
\end{proof}

\end{document}